\tikzstyle arrowstyle=[scale=1]
\def\@settitle{\begin{center}%
        \bfseries\Large
        \@title
    \end{center}%
}
\patchcmd{\@setauthors}{\MakeUppercase}{\normalsize}{}{}
\theoremstyle{plain}
\newtheorem{theorem}{Theorem}[section]
\newtheorem{lemma}[theorem]{Lemma}
\theoremstyle{remark}
\newcommand{\beq}[1]{\begin{equation}\label{#1}}
\newcommand{\enq}[0]{\end{equation}}
\def\E{\mathbb{E}}
\def\N{\mathbb{N}}
\def\RR{\mathbb{R}_{\ge 0}}
\DeclareMathOperator\fix{fix}
\DeclareMathOperator\per{per}
\DeclareMathOperator\intt{int}
\def\imod#1{\allowbreak\mkern10mu({\operator@font mod}\,\,#1)}
\begin{document}

\title{Elementary symmetric polynomials under the fixed point measure}

\author{Ayush Khaitan}
\address{Department of Mathematics, Rutgers University, Piscataway, NJ 08854, USA}
\email{ayush.khaitan@rutgers.edu}

\author{Ishan Mata}
\address{Theoretical Statistics and Mathematics Unit, Indian Statistical Institute, New Delhi 110016, India}
\email{ishanmata@gmail.com}

\author{Bhargav Narayanan}
\address{Department of Mathematics, Rutgers University, Piscataway, NJ 08854, USA}
\email{narayanan@math.rutgers.edu}

\date{8 April, 2025}
\subjclass[2010]{Primary 05A20; Secondary 05E05, 15A15}

\begin{abstract}
	We identify a surprising inequality satisfied by elementary symmetric polynomials under the action of the fixed point measure of a random permutation. Concretely, for any collection of $n$ non-negative real numbers $a_1, \dots, a_n \in \RR$, we prove that
	\[
		\frac{1}{n!} \sum_{\pi \in S_n} \left[\prod_{\{i:i=\pi(i)\}} a_i\right] \ge \frac{1}{\binom{n}{2}} \sum_{S \in\binom{[n]}{2}} \left[ \left(\prod_{\{i \in S\}} a_i \right)^{1/2}\right],
	\]
	and this bound is sharp. To prove this elementary inequality, we construct a collection of differential operators to set up a monotone flow that then allows us to establish the inequality.
\end{abstract}
\maketitle

\section{Introduction}
For integers $0 \le k \le n$, the elementary symmetric polynomial $e_k(\mathbf{x})$ in the variables $\mathbf{x} = (x_1, \dots, x_n)$ is defined by
\[
	e_k (\mathbf{x}) = \sum_{S \in \binom{[n]}{k} } \prod_{i \in S} x_i,
\]
and its normalised counterpart $s_k(\mathbf{x})$ is given by $s_k(\mathbf{x}) = e_k(\mathbf{x}) / \binom{n}{k}$; here, we write $\binom{[n]}{k}$ for the family of $k$-element subsets of $[n] = \{1, \dots, n\}$, and as is standard, interpret any product over the empty set as being one, so that $e_0(\mathbf{x}) = s_0(\mathbf{x}) = 1$.

Inequalities involving the elementary symmetric polynomials are a classical subject of study. To give but two famous examples, for any sequence $\mathbf{a} = (a_1, \dots, a_n)$ of non-negative reals, a well-known theorem of Newton asserts that the sequence $\{ s_k(\mathbf{a}) \}_{k=0}^n $ is log-concave, and this in turn implies another well-known result of Maclaurin that the sequence $\{( s_k(\mathbf{a}) )^{1/k}\}_{k=1}^n $ is non-increasing; for a survey of these and other related results, see~\citep{HLP, CSM}.

In this paper, we shall prove a new inequality involving the elementary symmetric polynomials, one that arises from the action of the fixed point measure of a random permutation on these polynomials. Writing $\fix(\pi)$ for the set of fixed points of a permutation $\pi:[n] \to [n]$ and $S_n$ for the symmetric group over $[n]$, our main result is as follows.

\begin{theorem}\label{thm:main}
	For any collection $\mathbf{a} = (a_1, \dots, a_n)$ of $n \ge 2$ non-negative reals, we have
	\beq{mainbd}
	\E_{\pi \in S_n} \left[\prod_{i\in\fix(\pi)} a_i\right] \ge \E_{S \in\binom{[n]}{2}} \left[ \left(\prod_{i \in S} a_i \right)^{1/2}\right],
	\enq
	where both expectations are over the respective uniform measures; furthermore, provided $n \ge 3$, equality holds if and only if $a_i = 1$ for all $1 \le i \le n$.
\end{theorem}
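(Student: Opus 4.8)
The plan is to recast the left-hand side as a polynomial identity, exploit a coordinatewise convexity of the difference of the two sides, and then run a monotone flow — equivalently, an induction on $n$ — that pushes everything down to the equality point $\mathbf 1=(1,\dots,1)$.

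First I would reformulate the left-hand side. Writing $D_m$ for the number of derangements of $[m]$ and using that exactly $D_{n-|T|}$ permutations of $[n]$ have fixed-point set precisely $T$, inclusion--exclusion gives
\[
  L_n(\mathbf a):=\E_{\pi\in S_n}\Big[\prod_{i\in\fix(\pi)}a_i\Big]=\frac1{n!}\sum_{k=0}^n D_{n-k}\,e_k(\mathbf a)=\sum_{k=0}^n\frac{s_k(\mathbf a-\mathbf 1)}{k!},
\]
so $L_n$ is a polynomial of degree at most one in each variable (multi-affine) with non-negative coefficients, and — the key structural fact — $\partial_{a_i}L_n(\mathbf a)=\tfrac1n\,L_{n-1}(\mathbf a^{(i)})$, where $\mathbf a^{(i)}$ denotes $\mathbf a$ with the $i$-th coordinate deleted. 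These partial derivatives, together with the weighted operators $\sqrt{a_i}\,\partial_{a_i}$ below, are the promised collection of differential operators: differentiating the fixed-point functional on $[n]$ returns the fixed-point functional on $[n]\setminus\{i\}$. Writing $R_n(\mathbf a)=s_2(\sqrt{\mathbf a})$ for the right-hand side and $F_n=L_n-R_n$, the target becomes $F_n\ge 0$ on $\RR^n$, with equality only at $\mathbf 1$ when $n\ge 3$; the base case $n=2$ is the identity $F_2=\tfrac12(\sqrt{a_1a_2}-1)^2$.

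The observation driving the argument is that, since $L_n$ is affine in each $a_i$ while each $\sqrt{a_ia_j}$ is concave, $F_n$ is convex in each coordinate separately. Fixing $\mathbf a^{(i)}$ and setting $\Sigma_i=\sum_{j\ne i}\sqrt{a_j}$, the convex one-variable function $a_i\mapsto F_n(\mathbf a)$ has a unique minimiser $a_i^\star=\big(\Sigma_i/((n-1)L_{n-1}(\mathbf a^{(i)}))\big)^2>0$, and a direct computation using $\partial_{a_i}L_n=\tfrac1n L_{n-1}(\mathbf a^{(i)})$ gives
\[
  \min_{a_i\ge 0}F_n(\mathbf a)= L_n(\mathbf a^{(i)},0)-\frac{e_2(\sqrt{\mathbf a^{(i)}})}{\binom n2}-\frac{\Sigma_i^{\,2}}{n(n-1)^2\,L_{n-1}(\mathbf a^{(i)})}.
\]
Since $F_n(\mathbf a)$ is at least its minimum over $a_i$, it suffices to show this displayed $(n-1)$-variable quantity is non-negative, and there one brings in the inductive hypothesis $L_{n-1}\ge R_{n-1}$ to control the $1/L_{n-1}$ term. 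This is the monotone flow in disguise: the weighted gradient flow $\dot a_i=-\sqrt{a_i}\,\partial_{a_i}F_n(\mathbf a)$ decreases $F_n$ at rate $\sum_i\sqrt{a_i}(\partial_{a_i}F_n)^2\ge 0$, stays in the open orthant (its drift at $a_i=0$ equals $\Sigma_i/(2\binom n2)>0$), has bounded trajectories because $F_n\to+\infty$ as $\|\mathbf a\|\to\infty$, and hence carries every initial point down to an interior critical point, so $F_n\ge\min\{F_n(\mathbf p):\nabla F_n(\mathbf p)=0\}$.

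The hard part, I expect, is the analysis of the critical locus — equivalently, verifying that the reduced $(n-1)$-variable inequality above is sharp only at $\mathbf 1$, or that the flow has no spurious equilibria. The critical equations are $\sqrt{a_i}\,L_{n-1}(\mathbf a^{(i)})=\Sigma_i/(n-1)$ for every $i$; combined with the relation $\sum_i a_i\,\partial_{a_i}L_n=\sum_i a_i\,\partial_{a_i}R_n=R_n$ valid at any critical point (Euler's identity for the degree-one function $R_n$) and with the multi-affine identity $L_{n-1}(\mathbf a^{(i)})-L_{n-1}(\mathbf a^{(j)})=\tfrac{a_j-a_i}{n-1}L_{n-2}(\mathbf a^{(i,j)})$, one eliminates coordinates in pairs and finds that the only critical point with $n\ge 3$ is $\mathbf a=\mathbf 1$ (for $n=2$ it is the whole hyperbola $a_1a_2=1$). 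Since $F_n(\mathbf 1)=0$ this closes the proof, including the equality statement; the faces $\{a_i=0\}$ never carry the minimum because $\partial_{a_i}F_n\to-\infty$ there, apart from the trivial point $\mathbf 0$, where $F_n=D_n/n!>0$. Carrying out this critical-point elimination cleanly for all $n$ — which is exactly what the collection of differential-operator identities is engineered to make tractable — is the step I would budget the most effort for.
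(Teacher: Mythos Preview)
Your high-level strategy coincides with the paper's: reduce to a compact region, rule out boundary minima, and show that $\mathbf 1$ is the only interior critical point of $F_n=L_n-R_n$. Your setup is correct --- the multi-affine identity $\partial_{a_i}L_n=\tfrac1n L_{n-1}(\mathbf a^{(i)})$, the coordinatewise convexity of $F_n$, the boundary analysis via $\partial_{a_i}F_n\to-\infty$ as $a_i\to 0^+$, and the identity $L_{n-1}(\mathbf a^{(i)})-L_{n-1}(\mathbf a^{(j)})=\tfrac{a_j-a_i}{n-1}L_{n-2}(\mathbf a^{(i,j)})$ all check out.

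The genuine gap is exactly where you place it: the uniqueness of the critical point. You offer two routes --- an induction via minimising out one coordinate (using $L_{n-1}\ge R_{n-1}$ to bound the resulting $(n-1)$-variable expression), and a ``pairwise elimination'' from the critical equations --- but carry neither through. Both are more delicate than they look. For the inductive route, after minimising in $a_i$ you are left with
\[
L_n(\mathbf a^{(i)},0)-\frac{e_2(\sqrt{\mathbf a^{(i)}})}{\binom n2}-\frac{\Sigma_i^{\,2}}{n(n-1)^2 L_{n-1}(\mathbf a^{(i)})}\ge 0,
\]
and bounding $1/L_{n-1}$ by $1/R_{n-1}$ does not obviously close this: the first term $L_n(\cdot,0)$ has derangement weights $c_{n-k}/n!$ rather than $c_{n-1-k}/(n-1)!$, so it is not $L_{n-1}$ and the inequality does not reduce cleanly to the inductive hypothesis. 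For the pairwise route, subtracting two critical equations and using your identity yields, for $a_i\ne a_j$,
\[
L_{n-1}(\mathbf a^{(j)})=\frac{\sqrt{a_i}(\sqrt{a_i}+\sqrt{a_j})}{n-1}\,L_{n-2}(\mathbf a^{(i,j)})-\frac{1}{n-1},
\]
which is a relation, not a contradiction; pushing this further to force all coordinates equal is not at all straightforward.

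The paper resolves this step by a different and rather specific device: instead of solving the critical equations, it exhibits at each $\mathbf x\ne\mathbf 1$ an explicit direction of decrease. After dispatching the cones $(0,1)^n$ and $(1,\infty)^n$ separately, it orders $x_1\le\cdots\le x_n$, picks the largest $k$ with $x_k<x_{k+1}$, and shows that the operator $\mathcal O_k=x_k\partial_{x_k}-x_n\partial_{x_n}$ satisfies $\mathcal O_k F_n(\mathbf x)<0$. The verification of this last inequality is the bulk of the proof and itself breaks into several regimes depending on how many of the top coordinates coincide, with a combinatorial identity on derangements appearing at the extreme case $k=1$. Your proposed elimination does not obviously produce this structure; if you pursue your plan, expect the critical-point analysis to require a comparable case-by-case argument rather than a clean algebraic cancellation.
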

In terms of the rencontres numbers $D(n,k)$ that count the number of permutations of an $n$-element set with exactly $k$ fixed points, Theorem~\ref{thm:main} may be reformulated as follows: for any collection $\mathbf{a} = (a_1, \dots, a_n)$ of $n$ non-negative reals, we have
\[
	\sum_{k =0}^n d({n,k}) s_k\left(\mathbf{a}\right) \ge s_2\left(\sqrt{\mathbf{a}}\right),
\]
where $d(n,k)$ is the fraction $D(n,k) / n!$ and $\sqrt{\mathbf{a}} = (\sqrt{a_1}, \dots, \sqrt{a_n})$ is the vector of non-negative square roots of the elements of $\mathbf{a}$.

While the statement of Theorem~\ref{thm:main} appears to be somewhat unmotivated and magical on the surface, it arises naturally in the context of lower bounds for the \emph{matrix permanent}, a subject about which we hope to say more elsewhere. Here, we content ourselves with pointing out that the left-hand side of the inequality in Theorem~\ref{thm:main} is easily seen to be $\per(M(\mathbf{a})) / n!$, where $M(\mathbf{a})$ is the $n\times n$ matrix whose diagonal entries are $a_1, \dots, a_n$ and whose off-diagonal entries are all one.

Our proof of Theorem~\ref{thm:main} relies on a somewhat unusual approach. Concretely, we construct a collection of differential operators to set up a monotone flow that allows us to locate (within some compact subset of the domain where it suffices to establish the inequality) the critical points of the difference between the left-hand side and the right-hand side of~\eqref{mainbd}. To get a sense of the difficulties involved in proving Theorem~\ref{thm:main}, and to appreciate why our proof of Theorem~\ref{thm:main} requires such machinery, it will be helpful to consider the result in some special cases and for small values of $n \in \N$.

When all the $a_i$ are equal, say to $a$, then the right-hand side of~\eqref{mainbd} is just $a$, while the left-hand side of~\eqref{mainbd} is $\E[a^X]$, where $X$ is the number of fixed points of a uniformly random permutation; since $\E[X]=1$, the result in this case follows from convexity via Jensen's inequality. It is tempting to look for a similar argument that relies on simple convexity in the general case; such a proof might well exist, but we have tried hard to find such an argument and have nothing to show for our efforts.

The statement of Theorem~\ref{thm:main} is easy to check for $n=2$ since it asserts (after scaling by a factor of $2!=2$) that $1+a_1 a_2 \ge 2\sqrt{a_1 a_2}$ for any $a_1, a_2 \ge 0$; this is of course immediate from the AM--GM inequality. Our main result is already non-trivial for $n=3$. In this case, the inequality asserts (now, after scaling by a factor of $3!=6$) that for any $a_1, a_2, a_3 \ge 0$, we have
\beq{n3}
2 + a_1 + a_2 + a_3 + a_1a_2a_3 \ge 2\sqrt{a_1a_2} + 2\sqrt{a_2a_3} + 2 \sqrt{a_1a_3}.
\enq
It is not too difficult to prove~\eqref{n3}, but what is striking to us is the fact that every elementary proof of~\eqref{n3} that we know of appears to \emph{break symmetry} in the variables $(a_1, a_2, a_3)$ in some way (even though~\eqref{n3} is itself completely symmetric). For example, we may deduce~\eqref{n3} from a well-known inequality of Schur~\citep{CSM} that says that for any $x,y,z \ge 0$, we have $x(x-y)(x-z) + y(y-z)(y-x) + z(z-x)(z-y) \ge 0$; that said, the proof of this inequality itself involves some symmetry breaking, and indeed, it is known, see~\citep{Mitrin,Wu}, that on account of this symmetry breaking, there are no natural extensions of Schur’s inequality that apply to four or more non-negative reals.

In fact, we can quantify the difficulty in proving~\eqref{mainbd} a little more precisely. First, it is not too hard to show that the validity of~\eqref{n3} (and thus~\eqref{mainbd} as well) has no `sum of squares'-certificate in the ring of polynomials. Second, there is a rather powerful inequality due to Muirhead~\citep{Muirhead} that allows one to compare two symmetric means of non-negative reals. The right-hand side of~\eqref{mainbd} is a symmetric Muirhead mean of the numbers $a_1, \dots, a_n$ (associated with the vector $(1/2,1/2,0, \dots, 0)$), while the left-hand side is a weighted linear combination of the elementary symmetric polynomials, which are themselves Muirhead means of the numbers $a_1, \dots, a_n$ as well. However, it can be shown that even~\eqref{n3}, the specialisation of~\eqref{mainbd} to three variables, is \emph{not in the Muirhead semiring} in the sense of~\citep{Cuttler}, or in other words, that~\eqref{n3} does not follow (in a logically precise sense) from Muirhead's inequality.

The rest of this paper is organised as follows. Our proof of Theorem~\ref{thm:main} via the construction of a monotone flow is given in Section~\ref{sec:proof}. We conclude with a discussion of open questions in Section~\ref{sec:conc}.

\section{Proof of the main result}\label{sec:proof}
We shall prove Theorem~\ref{thm:main}, for $n \ge 4$, through an analysis of the critical points of an appropriately defined real-valued function over $\RR^n$. To carry out this analysis, we need some notation.

Let $c_k$ denote the number of derangements of $[k]$, i.e., the number of permutations of $[k]$ with no fixed points. For $n\in\N$, we define the functions $L_n(\mathbf{x})$ and $R_n(\mathbf{x})$ in the real variables $\mathbf{x} = (x_1, \dots, x_n)$ to be the left-hand side and right-hand side of~\eqref{mainbd}, respectively; more precisely, we have
\[
	L_n(\mathbf{x}) = \frac{1}{n!} \sum_{i=0}^n c_{n-i} e_i(\mathbf{x}) = \frac{1}{n!} \sum_{\pi \in S_n} \prod_{i \in \fix(\pi)} x_i,
\]
and
\[
	R_n(\mathbf{x}) = s_2\left(\sqrt{\mathbf{x}}\right) = \frac{1}{\binom{n}{2}} \sum_{S \in \binom{[n]}{2}} \left(\prod_{i \in S} x_i\right)^{1/2},
\]
where $\sqrt{\mathbf{x}} = (\sqrt{x_1}, \sqrt{x_2}, \ldots, \sqrt{x_n})$ is the vector of non-negative square roots of the entries of $\mathbf{x}$.

Our goal then is to prove that the function
\[f_n(\mathbf{x}) = L_n(\mathbf{x}) - R_n(\mathbf{x})\]
is non-negative for all $\mathbf{x} \in \RR^n$. In broad strokes, our proof of Theorem~\ref{thm:main} proceeds as follows.
\begin{enumerate}
	\item First, we establish that it suffices to show that $f_n \ge 0$ over a compact subset $\mathfrak{C} \subset \RR^n$, and we also show that $f_n$ does not attain its minimum on the boundary of $\mathfrak{C}$.
	\item We know that a differentiable function on a compact set attains its minimum either at a point on the boundary or at a critical point in the interior, so we finish by showing that $f_n$ has a unique critical point $(1,\dots,1)$ in the interior of $\mathfrak{C}$, and that $f_n$ is equal to zero at this point.
\end{enumerate}

The first step is fairly straightforward, but the second step is somewhat delicate. To show that $(1, \dots, 1)$ is the unique critical point of $f_n$ in $\mathfrak{C}$, we need to show that the gradient $\nabla f_n$ is non-vanishing at every other point in the interior of $\mathfrak{C}$. However, this appears to be quite difficult; indeed, it is unclear, a priori, which components of $\nabla f_n$ ought to be non-zero at any given point. We circumvent this difficulty by instead constructing a monotone flow; concretely, we construct a family of differential operators whose action on $f_n$ allows us to exhibit, at each $\mathbf{x} \ne (1, \dots, 1)$, a direction $d(\mathbf{x})$ in which $f_n$ is decreasing.

We now turn to the details of how to execute this strategy.

\begin{proof}[Proof of Theorem~\ref{thm:main}]
	We shall first prove the inequality assuming $n\geq 4$; the cases where $n\le 3$ are handled separately at the end with more ad hoc arguments.

	Before we proceed, we remind the reader that the number $c_k$ of derangements of $[k]$ is given by
	\[c_k=k!\sum\limits_{j=0}^k\frac{(-1)^j}{j!};\]
	in particular, $k!/3 \le c_k \le k!/2 $ for $k\ge 2$.

	We first show that it suffices to restrict our attention to a compact subset of $\RR^n$.

	\begin{lemma}\label{lem:compact}
		For any $n \ge 4$, the function $f_n(\mathbf{x})$ is non-negative for all $\mathbf{x} \in \RR^n$ if and only if it is non-negative for all $\mathbf{x} \in  \mathfrak{C}$, where
		\[
			\mathfrak{C} = \{ \mathbf{x} = (x_1, \dots, x_n): x_1 \ge 0,\dots,x_n \ge 0 \text{ and } \sum_i x_i \le 6n \}.
		\]
	\end{lemma}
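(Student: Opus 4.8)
The forward implication is immediate, since $\mathfrak{C} \subseteq \RR^n$. For the reverse implication, the plan is to show that $f_n(\mathbf{x}) > 0$ for every $\mathbf{x} = (x_1, \dots, x_n) \in \RR^n$ with $\sum_i x_i > 6n$; granting this, if $f_n \ge 0$ on $\mathfrak{C}$ then $f_n \ge 0$ on all of $\RR^n$. The underlying point is that once the total mass is large, $L_n$ dominates $R_n$ by a comfortable margin, so that only crude estimates are needed away from $\mathfrak{C}$.

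The two estimates I would use are the following. First, by the concavity of $t \mapsto \sqrt{t}$ (equivalently, by the Cauchy--Schwarz inequality), we have
\[
	R_n(\mathbf{x}) = \E_{S \in \binom{[n]}{2}} \left[ \Big( \prod_{i \in S} x_i \Big)^{1/2} \right] \le \left( \E_{S \in \binom{[n]}{2}} \Big[ \prod_{i \in S} x_i \Big] \right)^{1/2} = \sqrt{s_2(\mathbf{x})}.
\]
Second, since every $s_i(\mathbf{x})$ is non-negative, retaining only the $i = 1$ and $i = 2$ terms in the expansion $L_n(\mathbf{x}) = \sum_{i=0}^{n} d(n,i)\, s_i(\mathbf{x})$ gives
\[
	L_n(\mathbf{x}) \ge d(n,1)\, s_1(\mathbf{x}) + d(n,2)\, s_2(\mathbf{x}).
\]
A direct count gives $d(n,1) = c_{n-1}/(n-1)!$ and $d(n,2) = c_{n-2}/(2(n-2)!)$, so for $n \ge 4$ the bound $c_k \ge k!/3$ (valid for all $k \ge 2$, as recalled above) yields $d(n,1) \ge 1/3$ and $d(n,2) \ge 1/6$, and hence $L_n(\mathbf{x}) \ge \tfrac{1}{3} s_1(\mathbf{x}) + \tfrac{1}{6} s_2(\mathbf{x})$.

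Combining the two estimates with the hypothesis $s_1(\mathbf{x}) = \tfrac{1}{n}\sum_i x_i > 6$, I obtain
\[
	L_n(\mathbf{x}) - R_n(\mathbf{x}) \ge \tfrac{1}{3} s_1(\mathbf{x}) + \tfrac{1}{6} s_2(\mathbf{x}) - \sqrt{s_2(\mathbf{x})} > 2 + \tfrac{1}{6}\, s_2(\mathbf{x}) - \sqrt{s_2(\mathbf{x})},
\]
and the final quantity is bounded below by $\tfrac{1}{2}$ uniformly in $\mathbf{x}$, because the quadratic $\tfrac{1}{6} t^2 - t + 2$ in the variable $t = \sqrt{s_2(\mathbf{x})} \ge 0$ attains its minimum value $\tfrac{1}{2}$ at $t = 3$. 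Thus $f_n(\mathbf{x}) \ge \tfrac12 > 0$, as required; indeed, the same chain of inequalities shows $f_n \ge \tfrac12$ on the outer face $\{\sum_i x_i = 6n\}$ of $\partial\mathfrak{C}$ as well, anticipating the later claim that $f_n$ does not attain its minimum there. There is no genuine obstacle in this argument: the only quantity to calibrate is the constant in the definition of $\mathfrak{C}$, which must exceed $9/2$ so that the displayed quadratic stays positive after the loss of the factor $3$ coming from $d(n,1) \ge 1/3$, and $6$ is a convenient round choice. The substantive difficulty of the paper is confined entirely to the subsequent analysis of $f_n$ on the compact region $\mathfrak{C}$.
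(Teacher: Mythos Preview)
Your argument is correct and follows essentially the same approach as the paper: both proofs keep only the $i=1,2$ contributions to $L_n$ via the bound $c_k \ge k!/3$ and then complete the square against $R_n$. The only cosmetic difference is that the paper does the completion termwise (using $x_ix_j/6 - \sqrt{x_ix_j} \ge -3/2$ for each pair, yielding $f_n \ge 5/6$), whereas you first aggregate $R_n \le \sqrt{s_2}$ by Jensen and complete the square once in $t=\sqrt{s_2}$, arriving at the marginally weaker $f_n \ge 1/2$.
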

	\begin{proof}
		We claim that for all $n \ge 4$, $f_n(\mathbf{x}) \ge 5/6$ whenever $\sum_i x_i \ge 6n$. To see this, note that
		\begin{align*}
			f_n (\mathbf{x}) & = \frac{1}{n!} \sum_{k=0}^n c_{n-k}e_k\left(\mathbf{x}\right) - \frac{1}{\binom{n}{2}} e_2\left(\sqrt{\mathbf{x}}\right)                                      \\
			                 & \geq \frac{1}{n!} \left( c_n + c_{n-1} e_1(\mathbf{x}) + c_{n-2} e_2(\mathbf{x}) \right) - \frac{1}{\binom{n}{2}} e_2\left(\sqrt{\mathbf{x}}\right)           \\
			                 & \ge \frac{1}{3 n!} \left( n! + (n-1)! \cdot e_1(\mathbf{x}) + (n-2)! \cdot e_2(\mathbf{x}) \right) - \frac{1}{\binom{n}{2}} e_2\left(\sqrt{\mathbf{x}}\right) \\
			                 & \ge \frac{1}{3} + \frac{e_1(\mathbf{x})}{3n} + \frac{1}{\binom{n}{2}} \sum_{i<j}\left( x_i x_j / 6 - \sqrt{x_ix_j} \right),
		\end{align*}
		where the bound $c_{n-2} \ge (n-2)!/3$ relies on our assumption that $n \ge 4$. Now, it is easily verified that $y/6 - \sqrt{y} \ge -3/2$ for all $y \ge 0$, so it follows from the assumption that $e_1(\mathbf{x}) = \sum_i x_i \ge 6n$ that
		\begin{align*}
			f_n (\mathbf{x}) & \ge \frac{1}{3}+ \frac{6n}{3n} + \frac{1}{\binom{n}{2}} \sum_{i<j}\left( -3/2 \right) \\
			                 & = 1/3 + 2 -3/2 = 5/6,
		\end{align*}
		as claimed.
	\end{proof}

	Hence, it suffices to show that $f_n(\mathbf{x}) \ge 0$ for $\mathbf{x} \in \mathfrak{C}$. Next, we claim that $f_n(\mathbf{x})$ does not attain its minimum on the boundary of $\mathfrak{C}$. The boundary of $\mathfrak{C}$ is $\{\mathbf{x} \in S: e_1(\mathbf{x})=6n \vee x_1=0 \vee \dots \vee x_n=0\}$. Lemma~\ref{lem:compact} tells us that $f_n(\mathbf{x}) \ge 5/6$ when $e_1(\mathbf{x})=6n$, while it is easy to see that $\mathbf{1} = (1, \dots, 1)$ is in $\mathfrak{C}$ and that $f_n(\mathbf{1}) = 0$. So to show that $f_n(\mathbf{x})$ does not attain its minimum on the boundary of $\mathfrak{C}$, it suffices to consider those points $\mathbf{x}$ in the boundary of $\mathfrak{C}$ where $x_i=0$ for some $i \in [n]$. For any $i \in [n]$, as $x_i\to 0^+$ with the other $x_j$ fixed with at least one $x_j \ne 0$, we have $\partial f_n /\partial x_i = -\Theta(1/\sqrt{x_i}) \to -\infty$. Hence, at any point on the boundary of $\mathfrak{C}$ other than $\mathbf{0} = (0, \dots, 0)$ where some $x_i = 0$, $f_n$ is strictly decreasing in the direction of  $\mathbf{b}_i$, $i$-th standard basis vector, and this vector is clearly inward pointing. On the other hand, at $\mathbf{0}$, $f_n$ is easily seen to be strictly decreasing in the (inward pointing) direction of $\mathbf{1} = (1,\dots,1)$; indeed, we see that $f_n(t\mathbf{1}) = (c_n/n!) - (1 - c_{n-1}/(n-1)!)t + \Theta(t^2)$ as $t\to0^+$, and this shows (since $c_{n-1} < (n-1)!$) that $f_n(t\mathbf{1})$ is strictly decreasing in $t$ for small $t \ge 0$. Hence, we conclude that $f_n$ does not attain its minimum on the boundary of $\mathfrak{C}$.

	On a compact set, a continuous function may only attain its minimum at points on the boundary or at critical points in the interior. We shall finish the proof by showing that $f_n$ has exactly one critical point, namely the point $\mathbf{1}$, in the interior of $\mathfrak{C}$. Since $f_n(\mathbf{1}) = 0$, we may then conclude that $f_n(\mathbf{x})\ge 0$ for all $\mathbf{x} \in \RR^n$, and that equality is attained only at the point $\mathbf{1}$.

	We begin with some preliminary calculations. The partial derivatives of $L_n$ are given by
	\[
		\frac{\partial L_n}{\partial x_i} = \frac{\partial}{\partial x_i} \left( \frac{1}{n!} \sum_{\pi \in S_n} \prod_{j \in \fix(\pi)} x_j \right) = \frac{(n-1)!}{n!} \left( \frac{1}{(n-1)!} \sum_{\sigma \in S_{n-1}^{(i)}} \prod_{j \in \fix(\sigma)} x_j \right) = \frac{1}{n} L_{n-1}(\widehat{\mathbf{x}}_i)
	\]
	where $S_{n-1}^{(i)}$ denotes the symmetric group acting on $[n]\setminus\{i\}$, and $\widehat{\mathbf{x}}_i$ is the vector $\mathbf{x}$ with the $i$-th coordinate removed. Next, the partial derivatives of $R_n$ are given by
	\[
		\frac{\partial R_n}{\partial x_i} = \frac{\partial}{\partial x_i} \left( \frac{1}{\binom{n}{2}} \sum_{1 \le j < k \le n} \sqrt{x_j x_k} \right)
		= \frac{1}{n(n-1)} \sum_{j \ne i} \sqrt{\frac{x_j}{x_i}}.
	\]
	Therefore, the partial derivatives of $f_n(\mathbf{x}) = L_n(\mathbf{x}) - R_n(\mathbf{x})$ are given by
	\begin{equation} \label{eq:partial_f}
		\frac{\partial f_n}{\partial x_i} = \frac{1}{n} L_{n-1}(\widehat{\mathbf{x}}_i) - \frac{1}{n(n-1)} \sum_{j \ne i} \sqrt{\frac{x_j}{x_i}}.
	\end{equation}

	With these formulae in hand, we first verify that $\mathbf{1}$ is indeed a critical point of $f_n$.
	\begin{lemma}
		The point $\mathbf{1}$ is a critical point of $f_n$, and $f_n(\mathbf{1})=0$.
	\end{lemma}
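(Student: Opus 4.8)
The plan is to verify both assertions by direct substitution. For the value $f_n(\mathbf{1}) = L_n(\mathbf{1}) - R_n(\mathbf{1})$, I would observe that at $\mathbf{x} = \mathbf{1}$ every product $\prod_{j \in \fix(\pi)} x_j$ collapses to $1$, so $L_n(\mathbf{1}) = \tfrac{1}{n!}\sum_{\pi \in S_n} 1 = 1$; likewise every product $\prod_{i \in S}\sqrt{x_i}$ collapses to $1$, so $R_n(\mathbf{1}) = \tfrac{1}{\binom{n}{2}}\sum_{S \in \binom{[n]}{2}} 1 = 1$. Hence $f_n(\mathbf{1}) = 1 - 1 = 0$.

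For the critical-point claim, I would appeal to the formula~\eqref{eq:partial_f} for the partial derivatives. Evaluating it at $\mathbf{x} = \mathbf{1}$, the truncated vector $\widehat{\mathbf{1}}_i$ is just the all-ones vector of length $n-1$, so by the same computation as above $L_{n-1}(\widehat{\mathbf{1}}_i) = 1$, while each summand $\sqrt{x_j/x_i}$ equals $1$ and hence $\sum_{j \ne i}\sqrt{x_j/x_i} = n-1$. Substituting these into~\eqref{eq:partial_f} gives
\[
	\frac{\partial f_n}{\partial x_i}(\mathbf{1}) = \frac{1}{n}\cdot 1 - \frac{1}{n(n-1)}\cdot(n-1) = \frac{1}{n} - \frac{1}{n} = 0
\]
for every $i \in [n]$, so $\nabla f_n(\mathbf{1}) = \mathbf{0}$ and $\mathbf{1}$ is indeed a critical point of $f_n$.

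There is no genuine obstacle here: the lemma is a routine sanity check. Its real purpose is to record the baseline for the substantially harder task that follows, namely showing that $\mathbf{1}$ is the \emph{unique} critical point of $f_n$ in the interior of $\mathfrak{C}$, which is where the monotone-flow machinery will be needed.
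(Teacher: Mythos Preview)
Your proof is correct and follows essentially the same approach as the paper: both evaluate $L_n(\mathbf{1})$ and $R_n(\mathbf{1})$ directly as averages of $1$, and both plug $\mathbf{x}=\mathbf{1}$ into~\eqref{eq:partial_f} to obtain $\tfrac{1}{n}\cdot 1 - \tfrac{1}{n(n-1)}\cdot(n-1) = 0$ for each partial derivative.
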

	\begin{proof}
		First, note that
		\[
			f_n(\mathbf{1}) = L_n(\mathbf{1}) - R_n(\mathbf{1}) = \E_{\pi \in S_n}[1] - \E_{S \in \binom{[n]}{2}}[1] = 1 - 1 = 0.
		\]
		Next, using~\eqref{eq:partial_f}, we get that
		\[
			\frac{\partial f_n}{\partial x_i}\bigg|_{\mathbf{x}=\mathbf{1}} = \frac{1}{n} (1) - \frac{1}{n(n-1)} (n-1) = \frac{1}{n} - \frac{1}{n} = 0
		\]
		for all $i\in[n]$, establishing the claim.
	\end{proof}

	Next, we show that certain regions of $\RR^n$ are free of critical points of $f_n$. We first show that there are no critical points in the region $(1,\infty)^n$.
	\begin{lemma}\label{all>1}
		No critical points of the function $f_n$ lie in the region $(1,\infty)^n$.
	\end{lemma}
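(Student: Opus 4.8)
The plan is to show directly that $\nabla f_n$ does not vanish on $(1,\infty)^n$ by exhibiting, at each such point, a single coordinate direction in which $f_n$ is strictly increasing. The natural choice of direction is the one along a \emph{maximal} coordinate: given $\mathbf{x} \in (1,\infty)^n$, pick $i \in [n]$ with $x_i = \max_{j \in [n]} x_j$. The point of this choice is that it simultaneously controls the negative contribution of $R_n$ to $\partial f_n/\partial x_i$ from above and leaves the positive contribution of $L_n$ bounded below, and then one compares the two.

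For the $R_n$ term, I would use that for every $j \ne i$ we have $0 < x_j/x_i \le 1$ by maximality of $x_i$, hence $\sqrt{x_j/x_i} \le 1$; summing the $n-1$ such terms appearing in~\eqref{eq:partial_f} gives
\[
	\frac{1}{n(n-1)} \sum_{j \ne i} \sqrt{\frac{x_j}{x_i}} \;\le\; \frac{1}{n(n-1)} \cdot (n-1) \;=\; \frac{1}{n}.
\]
For the $L_n$ term, recall from~\eqref{eq:partial_f} that the relevant quantity is $\tfrac1n L_{n-1}(\widehat{\mathbf{x}}_i)$, and $L_{n-1}(\widehat{\mathbf{x}}_i)$ is the average over $\sigma \in S_{n-1}^{(i)}$ of the products $\prod_{j \in \fix(\sigma)} x_j$. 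Since every coordinate $x_j$ exceeds $1$, each such product is at least $1$ (the empty product, coming from a derangement, being exactly $1$), while the summand corresponding to the identity permutation of $[n]\setminus\{i\}$ equals $\prod_{j \ne i} x_j > 1$. Hence $L_{n-1}(\widehat{\mathbf{x}}_i) > 1$, and so $\tfrac1n L_{n-1}(\widehat{\mathbf{x}}_i) > \tfrac1n$.

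Putting these two estimates into~\eqref{eq:partial_f} yields $\partial f_n/\partial x_i > \tfrac1n - \tfrac1n = 0$, so $\nabla f_n(\mathbf{x}) \ne \mathbf{0}$ and $\mathbf{x}$ is not a critical point. There is no genuine obstacle in this argument; the only point requiring a little care is that the inequality $L_{n-1}(\widehat{\mathbf{x}}_i) > 1$ must be \emph{strict}, which is why I isolate the identity permutation's contribution rather than merely noting that all fixed-point products are at least $1$. (For $n \ge 4$ the group $S_{n-1}^{(i)}$ certainly contains the identity, which has fixed points, so this non-degeneracy is automatic.)
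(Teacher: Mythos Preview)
Your argument is correct. Choosing $i$ with $x_i$ maximal gives $\sqrt{x_j/x_i}\le 1$ for all $j\ne i$, so the $R_n$-contribution to $\partial f_n/\partial x_i$ is at most $1/n$; and since every $x_j>1$, each fixed-point product in $L_{n-1}(\widehat{\mathbf{x}}_i)$ is at least $1$, with the identity permutation contributing strictly more, so $L_{n-1}(\widehat{\mathbf{x}}_i)>1$ and the $L_n$-contribution exceeds $1/n$. Hence $\partial f_n/\partial x_i>0$ and $\mathbf{x}$ is not critical. (Your closing caveat about $n\ge 4$ is harmless but unnecessary: the identity lies in $S_{n-1}^{(i)}$ and has fixed points for every $n\ge 2$.)

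This is a genuinely different route from the paper's. The paper does not work directly with first derivatives; instead it shows that all the second partials $\partial^2 f_n/\partial x_i\partial x_j$ are strictly positive throughout $(1,\infty)^n$, and then uses the already-established fact that $\nabla f_n(\mathbf{1})=\mathbf{0}$ to conclude (by integrating along the segment from $\mathbf{1}$) that \emph{every} first partial is positive on $(1,\infty)^n$. Your argument is more elementary and self-contained --- it needs no second derivatives and does not rely on knowing that $\mathbf{1}$ is a critical point --- at the cost of yielding a slightly weaker conclusion (only one partial is shown to be nonzero, rather than all of them). For the purpose of the lemma, that is of course enough.
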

	\begin{proof}
		We bound the second derivatives of $f = f_n$ at each point $\mathbf{x} \in (1,\infty)^n$. First, we have
		\[ \frac{\partial^2 f}{\partial x_i^2}= \frac{1}{2 n (n-1)} \sum_{j\neq i} \sqrt{\frac{x_j}{x_i^3}} >0. \]
		Also, for $j\neq i$, we have
		\[\frac{\partial^2 f}{\partial x_j\partial x_i}= \frac{1}{n(n-1)}L_{n-2}(\widehat{\mathbf{x}}_{i,j}) -\frac{1}{2n(n-1)}\frac{1}{\sqrt{x_ix_j}},\]
		where $\widehat{\mathbf{x}}_{i,j}$ is the vector $\mathbf{x}$ with the $i$-th and $j$-th coordinates removed.
		Since $\mathbf{x} \in (1,\infty)^n$, we see that $L_{n-2}(\widehat{\mathbf{x}}_{i,j}) \ge 1$ and $\sqrt{x_i x_j} \ge 1$, so we then have
		\[\frac{\partial^2 f}{\partial x_j\partial x_i}\ge \frac{1}{n(n-1)} - \frac{1}{2n(n-1)} \ge \frac{1}{2n(n-1)} > 0.\]

		Recall that \[\frac{\partial f}{\partial x_i}\bigg|_{\mathbf{x}=\mathbf{1}}=0\] for each $i\in [n]$.
		Consequently, each derivative ${\partial f}/{\partial x_i}$ is positive everywhere in the region $(1,\infty)^n$, so there are no critical points in this region as claimed.
	\end{proof}

	Next, we show that there are no critical points in the region $(0,1)^n$.

	\begin{lemma}\label{all<1}
		No critical points of the function $f_n$ lie in the region $(0,1)^n$.
	\end{lemma}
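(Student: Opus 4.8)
The plan is to argue by contradiction, using only the first-order information in~\eqref{eq:partial_f}. So suppose that $\mathbf{x} \in (0,1)^n$ is a critical point of $f_n$. Since every coordinate of $\mathbf{x}$ is strictly positive, each partial derivative is well defined, and setting $\partial f_n / \partial x_i = 0$ in~\eqref{eq:partial_f} and multiplying through by $n(n-1)$ shows that $\mathbf{x}$ being a critical point is equivalent to the system of equations
\[
	(n-1)\, L_{n-1}(\widehat{\mathbf{x}}_i) = \sum_{j \ne i} \sqrt{\frac{x_j}{x_i}}, \qquad i \in [n].
\]

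The first step I would take is to symmetrise this system. Multiplying the $i$-th equation through by $\sqrt{x_i} > 0$ turns its right-hand side into $\sum_{j \ne i} \sqrt{x_j}$, and then summing these $n$ identities over $i \in [n]$ --- noting that each $\sqrt{x_j}$ is counted exactly $n-1$ times --- gives
\[
	(n-1) \sum_{i \in [n]} \sqrt{x_i}\, L_{n-1}(\widehat{\mathbf{x}}_i) = (n-1) \sum_{i \in [n]} \sqrt{x_i},
\]
and hence, after dividing by $n-1$, the identity $\sum_{i \in [n]} \sqrt{x_i} \left( L_{n-1}(\widehat{\mathbf{x}}_i) - 1 \right) = 0$. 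The second step is to observe that this is impossible because every summand is strictly negative: the vector $\widehat{\mathbf{x}}_i$ has all of its coordinates in $(0,1)$, and since
\[
	L_{n-1}(\mathbf{y}) = \frac{1}{(n-1)!} \sum_{\sigma \in S_{n-1}} \prod_{j \in \fix(\sigma)} y_j
\]
is an average of products of the coordinates of $\mathbf{y}$, every such product is at most one and is strictly less than one whenever $\fix(\sigma) \ne \emptyset$; since at least one $\sigma \in S_{n-1}$ has a fixed point (as $c_{n-1} \le (n-1)!/2 < (n-1)!$ for $n \ge 4$), it follows that $L_{n-1}(\widehat{\mathbf{x}}_i) < 1$. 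Combined with $\sqrt{x_i} > 0$, each term of the sum is negative, contradicting the displayed identity and proving the lemma.

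This argument is short and I expect no real obstacle in it; the only point that needs a moment's care is the strict inequality $L_{n-1}(\widehat{\mathbf{x}}_i) < 1$ on the \emph{open} cube, which is precisely what makes the averaging trick work. It is worth noting, though, that the same trick would be useless in regions where some coordinates are larger than one and others are smaller: there the quantities $L_{n-1}(\widehat{\mathbf{x}}_i) - 1$ need not all share a sign, and a genuinely different idea --- the monotone flow alluded to in the introduction --- will be required.
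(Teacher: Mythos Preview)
Your argument is correct and is a close cousin of the paper's proof: both show that the gradient cannot vanish on $(0,1)^n$ by exhibiting a direction along which the directional derivative is strictly negative, and both rest on the same key observation that $L_{n-1}(\widehat{\mathbf{x}}_i) < 1$ whenever all coordinates lie in $(0,1)$. The only real difference is the choice of direction. The paper takes the all-ones vector $\mathbf{1}$ and then needs the elementary inequality $\sqrt{x_j/x_i} + \sqrt{x_i/x_j} \ge 2$ to bound the square-root sum from below by $1$; you instead weight the $i$-th equation by $\sqrt{x_i}$, which amounts to taking the direction $(\sqrt{x_1},\dots,\sqrt{x_n})$, and this makes the square-root contribution collapse \emph{exactly} to $(n-1)\sum_j \sqrt{x_j}$ with no auxiliary inequality needed. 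Your version is thus marginally cleaner, though neither idea extends to the mixed region where some coordinates exceed $1$ and others do not --- as you correctly note at the end.
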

	\begin{proof}
		Consider $\langle \nabla f (\mathbf{x}), \mathbf{1} \rangle$ for $f = f_n$. This is given by
		\[\sum\limits_{i=1}^n\frac{\partial f}{\partial x_i}=\sum\limits_{i=1}^n\left[\frac{1}{n}L_{n-1}(\widehat{\mathbf{x}}_i) -\frac{1}{n(n-1)}\sum\limits_{j\in[n]\setminus \{i\}}\sqrt{\frac{x_j}{x_i}}\right].\]
		For any $\mathbf{x} \in (0,1)^n$, it is clear that $L_{n-1}(\widehat{\mathbf{x}}_i) < 1$ for each $i \in [n]$. Hence, \[\sum\limits_{i=1}^n \frac{1}{n}L_{n-1}(\widehat{\mathbf{x}}_i)<1.\]
		On the other hand, as $\sqrt{\frac{x_j}{x_i}}+\sqrt{\frac{x_i}{x_j}} \ge 2$, we have that
		\[\frac{1}{n(n-1)}\sum\limits_{i=1}^n\sum\limits_{j\in [n]\setminus \{i\}}\sqrt{\frac{x_j}{x_i}} \ge1.\]
		Therefore, we have $\langle \nabla f (\mathbf{x}), \mathbf{1} \rangle<0$ for any $\mathbf{x} \in (0,1)^n$, so there are no critical points in this region as claimed.
	\end{proof}

	Now, fix some $\mathbf{x} \in \intt \mathfrak{C} \setminus \{(0,1)^n \cup \{\mathbf{1}\} \cup (1,\infty)^n\}$. To show that $\mathbf{x}$ is not a critical point, we construct a monotone flow, i.e., we exhibit a direction $d(\mathbf{x})$ at each such $\mathbf{x}$ such that $\langle \nabla f_n(\mathbf{x}), d(\mathbf{x}) \rangle > 0$, thereby showing that $\nabla f_n(\mathbf{x}) \ne \mathbf{0}$ and that $\mathbf{x}$ is not a critical point. Hence, suppose for a contradiction that $\mathbf{x}$ is a critical point. As $f_n$ is a symmetric function of the $x_i$'s, we may assume without loss of generality that $0 < x_1\le \dots\le x_n$. Lemmas~\ref{all>1} and~\ref{all<1} tell us that $\mathbf{x} \notin (0,1)^n \cup (1,\infty)^n$. Thus, we may also assume that $x_1<x_n$, and that $x_1\le 1 \le x_n$. To finish, we will show that there exists a direction $d(\mathbf{x})$ at $\mathbf{x}$ such that $\langle \nabla f_n (\mathbf{x}), d(\mathbf{x}) \rangle > 0$. In more detail, for every $k\in [n-1]$, we shall consider the action of the differential operators
	\[
		\mathcal{O}_{k}= x_k \frac{\partial}{\partial x_k} - x_n \frac{\partial}{\partial x_n}
	\]
	on $f_n$ and show that $\mathcal{O}_{k} f_n (\mathbf{x}) < 0$ for the largest $k \in [n-1]$ such that $x_k < x_{k+1}$ (and such a $k$ exists by our assumption that $x_1  < x_n$); in other words, we may take $d(\mathbf{x})$ to be $x_k \mathbf{b}_k - x_n \mathbf{b}_n$ for some $k \in [n-1]$, where $(\mathbf{b}_1, \dots, \mathbf{b}_n)$ denotes the standard basis.

	First, we compute $\mathcal{O}_{k} f_n(\mathbf{x})$ for $k \in [n-1]$. With
	\[
		S_{k} (\mathbf{x}) = \sum_{i=1}^{n-2} c_{n-i} e_{i-1}(\widehat{\mathbf{x}}_{k,n}) - (n-2)!\sum\limits_{i\neq k,n} \frac{\sqrt{x_i}}{\sqrt{x_{k}}+\sqrt{x_n}},
	\]
	it follows from~\eqref{eq:partial_f} after some algebraic manipulation that
	\[\mathcal{O}_{k}f_n(\mathbf{x}) =\frac{1}{n!}(x_k- x_n) S_k(\mathbf{x}).\]
	In what follows, we shall show that $S_k (\mathbf{x})>0$ for some $k\in [n-1]$.

	\begin{lemma}\label{1equal}
		If $x_{n-1}<x_n$, then $S_{n-1} (\mathbf{x})>0$.
	\end{lemma}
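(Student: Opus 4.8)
The plan is to establish $S_{n-1}(\mathbf x)>0$ by discarding all but the two lowest-order terms of the sum defining it. Writing $\mathbf y=\widehat{\mathbf x}_{n-1,n}=(x_1,\dots,x_{n-2})$, so that
\[
	S_{n-1}(\mathbf x)=\sum_{i=1}^{n-2}c_{n-i}\,e_{i-1}(\mathbf y)-(n-2)!\,\frac{\sqrt{x_1}+\cdots+\sqrt{x_{n-2}}}{\sqrt{x_{n-1}}+\sqrt{x_n}},
\]
and discarding the non-negative terms with $i\ge 3$, using $c_{n-1}\ge (n-1)!/3$ and $c_{n-2}\ge (n-2)!/3$ (the latter valid since $n\ge 4$), it suffices to prove that
\[
	\frac{n-1}{3}+\frac{e_1(\mathbf y)}{3}\;>\;\frac{\sqrt{x_1}+\cdots+\sqrt{x_{n-2}}}{\sqrt{x_{n-1}}+\sqrt{x_n}}.
\]
The substance is a trade-off: the right-hand side is large only when the $\sqrt{x_j}$ are large, but then $e_1(\mathbf y)=x_1+\cdots+x_{n-2}$ is large too. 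To exploit this I would use the ordering $x_1\le\cdots\le x_n$ and the fact that $x_n\ge 1$, and split into two cases according to the size of $x_{n-1}$.

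If $x_{n-1}\ge 1$, the strict hypothesis $x_{n-1}<x_n$ gives $\sqrt{x_{n-1}}+\sqrt{x_n}>2\sqrt{x_{n-1}}$; writing $r_j=\sqrt{x_j/x_{n-1}}\in(0,1]$, we get $\frac{\sqrt{x_j}}{\sqrt{x_{n-1}}+\sqrt{x_n}}<r_j/2$ while $x_j=x_{n-1}r_j^2\ge r_j^2$. The target inequality then reduces to $\frac{n-1}{3}>\sum_{j=1}^{n-2}\bigl(\tfrac{r_j}{2}-\tfrac{r_j^2}{3}\bigr)$, which holds because $\tfrac{r}{2}-\tfrac{r^2}{3}\le\tfrac{3}{16}$ on $[0,1]$ (maximum at $r=3/4$) and $\tfrac{n-1}{3}>\tfrac{3(n-2)}{16}$, equivalently $7n+2>0$.

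If instead $x_{n-1}<1$, then $x_j\le x_{n-1}<1$ for every $j\le n-2$, and I would bound the denominator below using $x_n\ge 1$, namely $\sqrt{x_{n-1}}+\sqrt{x_n}\ge\sqrt{x_{n-1}}+1$, and the numerator above by Cauchy--Schwarz, $\sqrt{x_1}+\cdots+\sqrt{x_{n-2}}\le\sqrt{(n-2)\,e_1(\mathbf y)}$. Setting $w=\sqrt{e_1(\mathbf y)/(n-2)}$ — so that $w\le\sqrt{x_{n-1}}<1$ — the target inequality becomes $\frac{n-1}{3}+\frac{(n-2)w^2}{3}>\frac{(n-2)w}{w+1}$, which, after clearing the positive factor $3(w+1)$, is the assertion that the cubic $q(w)=(n-2)w^3+(n-2)w^2-(2n-5)w+(n-1)$ is positive for $w\ge 0$. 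This is immediate: after dropping the non-negative term $(n-2)w^3$, the remaining quadratic $(n-2)w^2-(2n-5)w+(n-1)$ has positive leading coefficient and negative discriminant $17-8n$, hence is positive everywhere.

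The hard part lies in getting the bookkeeping right rather than in any single estimate: crude bounds — for instance estimating the ratio above by $\tfrac{n-2}{2}$ and the main sum below by $c_{n-1}/(n-2)!$ alone — lose a constant factor that is fatal for large $n$, so one is forced both to keep the $e_1(\mathbf y)$ term and to separate the regime where $x_{n-1}$ is small (where the denominator is nonetheless $\ge 1$ because $x_n\ge 1$) from the regime where it is not. It is a pleasant feature of the argument that the higher elementary symmetric polynomials $e_i(\mathbf y)$ with $i\ge 2$ play no role and can be thrown away at the outset.
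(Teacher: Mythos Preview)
Your argument is correct. The opening reduction matches the paper's exactly: drop the terms with $i\ge 3$, use $c_{n-1}\ge (n-1)!/3$ and $c_{n-2}\ge (n-2)!/3$, and divide through by $(n-2)!$. From there the routes diverge. The paper decomposes the resulting expression \emph{term by term}, writing $S_{n-1}(\mathbf x)\ge\sum_{i=1}^{n-2}F_{n-1}(x_i)$ with
\[
	F_{n-1}(x)=\frac{c_{n-1}}{n-2}+c_{n-2}x-(n-2)!\,\frac{\sqrt{x}}{\sqrt{x_{n-1}}+\sqrt{x_n}},
\]
and shows each summand is positive via the single substitution $y=\sqrt{x/x_n}$, which (using $x\le x_{n-1}$ and $x_n\ge 1$) reduces everything to the elementary fact that $\tfrac13+\tfrac13 y^2-\tfrac{y}{y+1}>0$ on $(0,1]$. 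You instead treat the sum globally and split on $x_{n-1}\gtrless 1$: in the first case you normalise by $x_{n-1}$ and use the pointwise bound $\tfrac{r}{2}-\tfrac{r^2}{3}\le\tfrac{3}{16}$; in the second you apply Cauchy--Schwarz to collapse the many variables to a single $w$ and finish with a discriminant check. Both executions are clean, but the paper's is slightly shorter (no case split) and, more to the point, introduces the functions $F_k$ that are then recycled in the subsequent pairing arguments for $S_k$ with $k<n-1$; your approach, being more ad hoc, does not feed as naturally into that later part of the proof.
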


	\begin{proof}
		We have
		\begin{align*}
			S_{n-1}(\mathbf{x}) & = \sum_{i=1}^{n-2} c_{n-i} e_{i-1}(\widehat{\mathbf{x}}_{n-1,n}) - (n-2)!\sum_{i=1}^{n-2} \frac{\sqrt{x_i}}{\sqrt{x_{n-1}}+\sqrt{x_n}} \\
			                    & \ge c_{n-1} + c_{n-2} \sum_{i=1}^{n-2} x_i - (n-2)!\sum_{i=1}^{n-2} \frac{\sqrt{x_i}}{\sqrt{x_{n-1}}+\sqrt{x_n}}                       \\
			                    & =  \sum\limits_{i=1}^{n-2} F_{n-1}(x_i),
		\end{align*}
		where $F_{n-1}(x)$ is given by
		\beq{f-eq}
		F_{n-1}(x)=\frac{c_{n-1}}{n-2}+c_{n-2}x-(n-2)!\frac{\sqrt{x}}{\sqrt{x_{n-1}}+\sqrt{x_n}}.
		\enq
		We shall prove that $F_{n-1}(x)>0$ for all $x\in (0,x_{n-1}]$, implying that $S_{n-1}(\mathbf{x})>0$.

		Indeed, for $x \le x_{n-1}$, put $y = \sqrt{{x}/{x_n}}$ and note that
		\begin{align*}
			F_{n-1}(x) & =\frac{c_{n-1}}{n-2}+c_{n-2}x-(n-2)!\frac{\sqrt{x}}{\sqrt{x_{n-1}}+\sqrt{x_n}}            \\
			           & \geq \frac{(n-1)!}{3(n-2)}+\frac{(n-2)!}{3}x - (n-2)!\frac{\sqrt{x}}{\sqrt{x}+\sqrt{x_n}} \\
			           & =\frac{(n-1)!}{3(n-2)}+\frac{(n-2)!}{3}y^2x_n -(n-2)!\frac{y}{y+1};
		\end{align*}
		here, we use the fact that $n\ge 4$ to bound $c_{n-2} \ge (n-2)!/3$. Since $x_n \ge 1$, we then have
		\begin{align*}
			F_{n-1}(x) & \geq \frac{(n-1)!}{3(n-2)}+\frac{(n-2)!}{3}y^2 -(n-2)!\frac{y}{y+1} \\
			           & \ge (n-2)!\left[\frac{1}{3}+\frac{1}{3}y^2-\frac{y}{y+1}\right].
		\end{align*}
		The function
		\[\frac{1}{3}+\frac{1}{3}y^2-\frac{y}{y+1}\]
		is easily checked to be positive for all $y \in (0,1]$. Hence, $F_{n-1}(x)>0$ for all $x \in (0,x_{n-1}]$, implying that $S_{n-1}(\mathbf{x})>0$.
	\end{proof}

	Next, we outline how one may show that $S_{n-2}(\mathbf{x})>0$ when $x_{n-2}<x_{n-1}=x_n$. Before we do so, it will be helpful to generalise the definition of $F_{n-1}$ above, and we define $F_k(x)$ for each $k\in [n-2]$ by
	\[F_k(x) = \frac{c_{n-1}}{n-2} + c_{n-2}x - (n-2)!\frac{\sqrt{x}}{\sqrt{x_{k}}+\sqrt{x_n}}.\]

	As before, it is clear that $S_{n-2}(\mathbf{x}) \ge \sum_{i \ne n-2, n} F_{n-2}(x_i)$. While $F_{n-2}(x_i) > 0$ for each $i \in [n-3]$ as in the proof of Lemma~\ref{1equal}, the term $F_{n-2}(x_{n-1})$ can be negative when $x_{n-2} < x_{n-1} = x_n$. We get around this fact by pairing up $F_{n-2}(x_{n-1})$ with $F_{n-2}(x_1)$, say. Indeed, since $x_{n-1} = x_n$ and $x_n \ge 1$, and since $y=\sqrt{{x_1}/{x_n}}$ is at most $z=\sqrt{{x_{n-2}}/{x_n}}$, we have
	\begin{align*}
		F_{n-2}(x_1)+F_{n-2}(x_{n-1}) & \ge(n-2)!\left[\frac{1}{3}+\frac{1}{3}y^2 x_n-\frac{y}{z+1}+\frac{1}{3}+\frac{1}{3} x_{n-1}-\frac{1}{z+1}\right] \\
		                              & \ge(n-2)!\left[ \frac{1}{3}+\frac{1}{3}y^2-\frac{y}{z+1}+\frac{1}{3}+\frac{1}{3}-\frac{1}{z+1}\right]            \\
		                              & \ge(n-2)!\left[ 1+\frac{1}{3}y^2-\frac{y+1}{z+1}\right] \ge \frac{(n-2)!}{3}y^2 > 0,                             \\
	\end{align*}
	so it follows that
	\[S_{n-2}(\mathbf{x}) \ge \sum_{i \ne n-2, n} F_{n-2}(x_i) \ge F_{n-2}(x_1)+F_{n-2}(x_{n-1}) > 0,\]
	where of course the presence of the term $F_{n-2} (x_1)$ implicitly needs $n \ge 4$.

	We may extend the argument above to show for any $\lfloor n/2 \rfloor \le k \le n-2 $ that if $x_k<x_{k+1}=\dots=x_n$, then
	\[\mathcal{O}_{k}f_n(\mathbf{x}) =\frac{1}{n!}(x_k- x_n) S_k(\mathbf{x}) < 0.\]
	Indeed, we have $F_k(x_i)>0$ for each $1 \le i \le k-1$, and for each $k+1 \le j \le n-1$, we may pair up $F_k(x_j)$ and $F_k(x_{n-j})$ and check, as above, that $F_k(x_j) + F_k(x_{n-j}) > 0$. This then shows that
	\[
		S_{k}(\mathbf{x}) \ge \sum_{i \ne k, n} F_{n-2}(x_i) \ge \sum_{j = k+1}^{n-1}F_k(x_j) + F_k(x_{n-j}) > 0,\\
	\]
	as required.

	However, this pairing strategy fails once $\lfloor n/2 +1\rfloor$ or more of the largest coordinates of $\mathbf{x}$ are all equal. At this point, it will be convenient to treat the cases of even $n$ and odd $n$ separately.

	First, suppose that $n = 2\ell$ is even and that
	\[
		x_{k}<x_{k+1}=\dots=x_n\geq 1
	\]
	for some $1 \le k \le \ell - 1$.

	We reparametrise by setting $k=\ell - t -1$ for some $t\in\{0,1,\dots,\ell-2\}$, then set $y=\sqrt{x_{k}/x_n}$, and finally define
	\[
		f(y,t)=\sum_{i=1}^{n-2} c_{n-i} \binom{\ell + t}{i- 1}     - (n-2)! \left( \left(\ell + t\right) \frac{1}{y + 1} + \left(\ell - t -2\right) \frac{y}{y + 1} \right).
	\]

	Recall that
	\[
		S_{k} (\mathbf{x}) = \sum_{i=1}^{n-2} c_{n-i} e_{i-1}(\widehat{\mathbf{x}}_{k,n}) - (n-2)!\sum\limits_{i\neq k,n} \frac{\sqrt{x_i}}{\sqrt{x_{k}}+\sqrt{x_n}}.
	\]
	Since at least $\ell + t$ coordinates of $\mathbf{x}$ are bounded below by $1$, we have
	\[e_{i-1}(\widehat{\mathbf{x}}_{k,n}) \ge \binom{\ell + t}{i-1};\]
	it is also easy to see, since $x_1 \le \dots \le x_k$, that
	\[
		\sum_{1 \le i< k} \frac{\sqrt{x_i}}{\sqrt{x_{k}}+\sqrt{x_n}} \le \left(\ell - t -2\right) \frac{y}{y + 1},
	\]
	while the fact that $x_{k+1} = \dots = x_n$ gives us
	\[
		\sum\limits_{k < i \le n-1} \frac{\sqrt{x_i}}{\sqrt{x_{k}}+\sqrt{x_n}} = \frac{\ell + t}{y + 1}.
	\]
	It follows that $S_{k}(\mathbf{x}) \ge f(y,t)$, as claimed. The following lemma completes the argument in the case where $n$ is even.

	\begin{lemma}
		For all $y \in (0,1]$ and $t \in \{0,1,\dots,\ell-2\}$, we have $f(y,t)> 0$.
	\end{lemma}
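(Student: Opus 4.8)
The plan is to peel off the $y$-dependence. Write $A_t = \sum_{i=1}^{n-2} c_{n-i}\binom{\ell+t}{i-1}$ for the part of $f(y,t)$ not involving $y$, so that $f(y,t) = A_t - (n-2)!\,g_t(y)$ with
\[
g_t(y) = \frac{(\ell+t)+(\ell-t-2)y}{y+1}.
\]
Since $t \ge 0$ we have $\ell - t - 2 < \ell + t$, and hence $g_t(y) < \ell + t$ for every $y > 0$ (clearing the positive denominator and cancelling $\ell+t$, this is just $(\ell-t-2)y < (\ell+t)y$). Consequently it suffices to establish the $y$-free inequality
\beq{keyAt}
A_t \ge (\ell+t)\,(n-2)!,
\enq
because then $f(y,t) = A_t - (n-2)!\,g_t(y) > A_t - (\ell+t)(n-2)! \ge 0$ for all $y \in (0,1]$.

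To prove~\eqref{keyAt} I would reinterpret $A_t$ combinatorially. Reindexing by $j = i-1$ and invoking the standard identity $\sum_{j=0}^{m}\binom{m}{j}c_{M-j} = \#\{\pi\in S_M:\fix(\pi)\subseteq B\}$ for any fixed $m$-set $B\subseteq[M]$ (choose which elements of $B$ are fixed and derange the rest), together with $c_1 = 0$ to absorb the top term, we recognise
\[
A_t = \#\{\pi\in S_{n-1}:\fix(\pi)\subseteq B\} = \#\{\pi\in S_{n-1}:\fix(\pi)\cap A = \emptyset\},
\]
where $B\subseteq[n-1]$ has size $\ell+t$ and $A = [n-1]\setminus B$ has size $r := (n-1)-(\ell+t) = \ell-1-t$, so that $r\in\{1,\dots,\ell-1\}$ since $n = 2\ell$. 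Let $\Phi_m(s)$ denote the number of permutations of an $m$-element set having no fixed point in a fixed $s$-subset; thus $A_t = \Phi_{n-1}(r)$ and $\Phi_m(0) = m!$. Choosing the forbidden sets to be nested and conditioning on whether the newly forbidden point is itself fixed yields the recursion $\Phi_{n-1}(s) - \Phi_{n-1}(s+1) = \Phi_{n-2}(s)$, and telescoping from $s=0$ to $s = r-1$ gives
\[
A_t = \Phi_{n-1}(r) = (n-1)! - \sum_{s=0}^{r-1}\Phi_{n-2}(s).
\]

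Now~\eqref{keyAt} follows at once: each summand satisfies $\Phi_{n-2}(s)\le(n-2)!$, so this sum of $r$ terms is at most $r\,(n-2)!$, whence
\[
A_t \ge (n-1)! - r\,(n-2)! = (n-1-r)\,(n-2)! = (\ell+t)\,(n-2)!.
\]
Moreover, since $\Phi_{n-2}(s) < (n-2)!$ whenever $s\ge 1$ (the identity permutation is excluded), equality in~\eqref{keyAt} forces $r = 1$, i.e.\ $t = \ell-2$; this is consistent with the direct computation $f(y,\ell-2) = (n-2)(n-2)!\cdot\frac{y}{y+1} > 0$ available in that extremal case where $\ell-t-2 = 0$.

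The point that requires foresight is the very first reduction. It is tempting to try to deduce~\eqref{keyAt} by bounding only the first few terms of $A_t$ from below (using, say, $c_k \ge k!/3$), but no finite truncation can work, because~\eqref{keyAt} is \emph{tight} at $t = \ell-2$ and so the entire alternating tail of $A_t$ genuinely contributes. Recognising $A_t$ as a permutation count and exploiting the exact telescoping identity is precisely what circumvents this difficulty; the remaining ingredients (the elementary comparison $g_t(y) < \ell+t$ and the trivial bound $\Phi_{n-2}(s) \le (n-2)!$) are routine.
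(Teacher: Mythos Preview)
Your proof is correct. Both your argument and the paper's begin with the same reduction: since $g_t(y)$ is strictly decreasing in $y$ (equivalently, $\partial f/\partial y > 0$), it suffices to prove $f(0,t) \ge 0$, i.e.\ $A_t \ge (\ell+t)(n-2)!$.

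Where you diverge is in establishing this last inequality. The paper argues by downward induction on $t$: it verifies the base case $A_{\ell-2} = (n-2)(n-2)!$ by interpreting it as the count of permutations of $[n-1]$ not fixing $n-1$, and then shows $A_t - A_{t-1} < (n-2)!$ by bounding $\binom{\ell+t-1}{j} < \binom{n-2}{j}$ and recognising the resulting full sum as $(n-2)!$. You instead interpret $A_t = \Phi_{n-1}(r)$ uniformly for all $t$, derive the recursion $\Phi_{n-1}(s) - \Phi_{n-1}(s+1) = \Phi_{n-2}(s)$, telescope from $\Phi_{n-1}(0) = (n-1)!$, and apply the trivial bound $\Phi_{n-2}(s) \le (n-2)!$ termwise. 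The two arguments are close cousins --- in fact your recursion gives exactly $A_t - A_{t-1} = \Phi_{n-2}(\ell-1-t)$, which is what the paper is implicitly bounding --- but your packaging is a bit more uniform: it handles all $t$ at once without singling out the base case, and it makes transparent why equality occurs precisely at $t = \ell-2$ (the only $t$ for which the telescoping sum has a single term $\Phi_{n-2}(0) = (n-2)!$).
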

	\begin{proof}
		Note that
		\[\frac{\partial f}{\partial y}=(n-2)!\frac{2t+2}{(1+y)^2}> 0,\]
		so $f$ is strictly increasing in $y$ for any fixed $t$. Thus, to prove the lemma, it suffices to show that $f(0,t) \geq  0$ for each $t \in \{0,1,\dots,\ell-2\}$; since $\partial f/ \partial y>0$ for all $y \in (0,1)$, the fact that $f(0,t)\geq 0$ implies that $f(y,t)>0$ for all $y \in (0,1]$.

		In what follows, we first show that $f(0,\ell-2)=0$, and then show that $f(0,t)$ increases as we decrease $t$, establishing the claim.

		Let us first check that
		\beq{kmaxcase}
		f(0, \ell-2) = \sum\limits_{i=1}^{n-2}c_{n-i}\binom{n-2}{i-1}-(n-2)!(n-2)=0.
		\enq
		Note that $(n-2)!(n-2)$ is the number of ways of permuting the set $[n-1]$ while ensuring that $n-1$ is not a fixed point. On the other hand, any such permutation has $i-1$ fixed points for some $1 \le i \le n-2$, so we may first select $i-1$ fixed points from $[n-2]$ in ${n-2\choose i-1}$ ways, and then select a derangement of the remaining $(n-1)-(i-1)=n-i$ elements in $c_{n-i}$ ways. The identity~\eqref{kmaxcase} is now immediate.

		Finally, we show that $f(0,t-1) > f(0,t)$ for all $1 \leq t \leq \ell-2$. Indeed, from the fact that $\ell + t - 1 < n-2$, we get
		\begin{align*}
			f(0,t)-f(0,t-1) & =\sum_{i=1}^{n-2} c_{n-i} \left(\binom{\ell + t}{i- 1} - \binom{\ell+t-1}{i-1}\right) -(n-2)! \\
			                & =\sum_{i=1}^{n-2} c_{n-i} \binom{\ell + t-1}{i - 2} - (n-2)!                                  \\
			                & = \sum_{j=0}^{n-4} c_{n-2-j} \binom{\ell + t-1}{j} - (n-2)!                                   \\
			                & < \sum_{j=0}^{n-4} c_{n-2-j} \binom{n-2}{j} - (n-2)!.
		\end{align*}
		Note that $(n-2)!$ is the number of permutations of $[n-2]$, and this may also be enumerated by counting permutations that fix exactly $j$ elements in $[n-2]$ for some $0\leq j\leq n-2$, yielding
		\[
			(n-2)! = \sum\limits_{j=0}^{n-2}c_{n-2-j}\binom{n-2}{j}.
		\]
		It follows that
		\begin{align*}
			f(0,t)-f(0,t-1) & < \sum_{j=0}^{n-4} c_{n-2-j} \binom{n-2}{j} - \sum\limits_{j=0}^{n-2}c_{n-2-j}\binom{n-2}{j} \\
			                & = - \sum\limits_{j=n-3}^{n-2}c_{n-2-j}{n-2\choose j} < 0,
		\end{align*}
		and the claim follows.
	\end{proof}

	Next, suppose that $n = 2\ell + 1$ is odd, and that
	\[
		x_{k}<x_{k+1}=\dots=x_n\geq 1
	\]
	for some $1 \le k \le \ell - 1$. The argument for this case is similar to the one in the even case above, but with some minor modifications.

	In this case, it is more convenient to reparametrise by setting $k=\ell - t$ for some $t\in\{1,\dots,\ell-1\}$, then set $y=\sqrt{x_{k}/x_n}$, and finally define
	\[
		g(y,t) = \sum_{i=1}^{n-2} c_{n-i} \binom{\ell + t}{i- 1} - (n-2)! \left( \left(\ell + t\right) \frac{1}{y + 1} + \left(\ell - t -1\right) \frac{y}{y + 1} \right).
	\]
	Arguing as in the previous case, it is not difficult to verify that
	\begin{enumerate}
		\item $S_{k}(\mathbf{x}) \ge g(y,t)$,
		\item $\partial g/ \partial y = (n-2)!({2t+1})/{(1+y)^2} > 0$ for all $y \in (0,1)$,
		\item $g(0,t-1)>g(0,t)$ for all $2 \leq t \leq \ell-1$, and
		\item $g(0, \ell-1) = 0$.
	\end{enumerate}
	These facts then imply that $S_{k}(\mathbf{x}) > 0$, as desired.

	We have shown, assuming $n \ge 4$, that the only critical point of $f_n$ in the interior of the domain $\mathfrak{C}$ is $\mathbf{1}$, that $f_n(\mathbf{1}) = 0$, and that $f_n (\mathbf{x}) > 0$ for all $\mathbf{x} \in \RR^n \setminus \mathfrak{C}$; this establishes the result for $n \ge 4$.

	To finish our proof of the result in full generality, we need to prove the claimed bound~\eqref{mainbd} for $n \le 3$.

	When $n=2$, the result is trivial: we need to verify that $1+x y \ge 2\sqrt{x y}$ for any $x, y \ge 0$, and this is immediate from the AM--GM inequality.

	When $n=3$, we have to show that
	\[
		2 + x + y + z + xyz \ge 2\sqrt{x y} + 2\sqrt{y z} + 2\sqrt{x z}
	\]
	for any $x, y, z \ge 0$. From the AM--GM inequality, we have
	\[2 + xyz = 1 + 1 + xyz \ge 3(xyz)^{1/3},\]
	so it suffices to show that
	\[
		x+y+z + 3(xyz)^{1/3} \ge 2\sqrt{x y} + 2\sqrt{y z} + 2\sqrt{x z}.
	\]
	Putting $x=a^3$, $y=b^3$ and $z=c^3$, this is equivalent to showing that
	\[
		a^3 + b^3 + c^3 + 3abc \ge 2\left((ab)^{3/2} + (bc)^{3/2} + (ac)^{3/2}\right)
	\]
	for $a,b,c \ge 0$. Schur's inequality tells us that
	\[
		a^3 + b^3 + c^3 + 3abc \geq ab(a+b) + bc(b+c) + ac(a+c),
	\]
	and the fact that
	\[
		ab(a+b) + bc(b+c) + ac(a+c) \ge 2\left((ab)^{3/2} + (bc)^{3/2} + (ac)^{3/2}\right)
	\]
	follows from three applications of the AM--GM inequality. Additionally, one may check that the only case of equality occurs when $x=y=z=1$. Hence, we have shown that the claim holds for $n=3$ as well, and the result now follows.
\end{proof}

\section{Conclusion}\label{sec:conc}
Our result raises the possibility that there exist other interesting families of \emph{inhomogeneous} inequalities satisfied by the elementary symmetric polynomials. While there is a fairly well-developed algebraic theory of inequalities for symmetric polynomials, this theory is mostly concerned with \emph{homogeneous} inequalities. While it is not clear how to extend the existing algebraic theory to prove our main result, we suspect that such an extension is likely to lead to some interesting new discoveries; we hope that our result will serve as a spur for further developments in this direction.

\section*{Acknowledgements}
The second author is grateful to the National Board for Higher Mathematics for a postdoctoral fellowship, and to the Indian Statistical Institute (Delhi Centre) for hosting him as a postdoctoral fellow. The third author was supported by NSF grant DMS-2237138 and a Sloan Research Fellowship.

\bibliographystyle{amsplain}
\bibliography{fixed_point_measures}

\providecommand{\bysame}{\leavevmode\hbox to3em{\hrulefill}\thinspace}
\providecommand{\MR}{\relax\ifhmode\unskip\space\fi MR }
\providecommand{\MRhref}[2]{%
  \href{http://www.ams.org/mathscinet-getitem?mr=#1}{#2}
}
\providecommand{\href}[2]{#2}
\begin{thebibliography}{1}

\bibitem{Cuttler}
A.~Cuttler, C.~Greene, and M.~Skandera, \emph{Inequalities for symmetric means}, European J. Combin. \textbf{32} (2011), 745--761.

\bibitem{HLP}
G.~H. Hardy, J.~E. Littlewood, and G.~P\'olya, \emph{Inequalities}, Cambridge University Press, Cambridge, 1988.

\bibitem{Mitrin}
D.~S. Mitrinovi\'c, \emph{Analytic inequalities}, vol. Band 165, Springer-Verlag, New York-Berlin, 1970.

\bibitem{Muirhead}
R.~F. Muirhead, \emph{Some methods applicable to identities of symmetric algebraic functions of $n$ letters}, Proc. Edinburgh Math. Soc. \textbf{21} (1902), 144–--157.

\bibitem{CSM}
J.~M. Steele, \emph{The {C}auchy-{S}chwarz master class}, Mathematical Association of America, Washington, DC; Cambridge University Press, Cambridge, 2004.

\bibitem{Wu}
C.~W. Wu, \emph{On generalizations of {S}chur's inequality on sums of products of differences of real numbers}, Linear Multilinear Algebra \textbf{70} (2022), 6452--6468.

\end{thebibliography}

\end{document}